\newtheorem{thm}{Theorem}
\newtheorem{theorem}{Theorem}
\newtheorem*{theorem*}{Theorem}
\newtheorem{lemma}[thm]{Lemma}
\newtheorem*{thm*}{Theorem}
\newtheorem*{thmA*}{Theorem A}
\newtheorem*{thmB*}{Theorem B}
\newtheorem*{TheoremCFZ1*}{Theorem CFZ1}
\newtheorem*{MartyTheorem*}{Marty's Theorem}
\newtheorem*{TheoremCFZ2*}{Theorem CFZ2}
\newtheorem*{TheoremCFZ3*}{Theorem CFZ3}
\newtheorem*{theoremB*}{Theorem B}
\newtheorem*{theoremA*}{Theorem A}
\newtheorem*{theoremPL2*}{Theorem PL2}
\newtheorem*{TheoremR*}{Theorem R}
\newtheorem*{TheoremGN*}{Theorem GN}
\newtheorem*{theoremLu*}{Theorem Lu}
\newtheorem*{cor*} {Corollary}
 \newtheorem*{defn*} {Definition}
\newtheorem*{ZL*}{Zalcman's Lemma}
\newtheorem*{Zp*}{ZP1 Lemma}
\newtheorem*{ZP*}{ZP2 Lemma}
\newtheorem*{Ne*}{N Lemma}
\newtheorem*{claim*}{Claim}
  \theoremstyle{definition}
\newtheorem*{examp*}{Example}
\newtheorem*{remark*}{Remark}
 \theoremstyle{remark}
\newtheorem*{notation*}{Notation}
\newcommand{\C}{\mathbb{C}}
\newcommand{\D}{\displaystyle}
\newcommand{\DF}[2]{\frac{\D#1}{\D#2}}
 \renewcommand{\sectionmark}[1]{}
\renewcommand{\Im}{\operatorname{Im}}
\newcommand{\ve}{\varepsilon}
\renewcommand{\Im}{\operatorname{Im}}
\newcommand{\Extend}[5]{\ext@arrow 0099{\arrowfill@#1#2#3}{#4}{#5}}
\begin{document}

\baselineskip20pt

 \title[Differential inequalities, normality and quasi-normality]
 {Differential inequalities, normality and quasi-normality}
\author[Xiaojun Liu, Shahar Nevo and Xuecheng Pang]{Xiaojun Liu, Shahar Nevo and Xuecheng
Pang}
\thanks{Research of first author supported by the NNSF of China Approved
No.11071074 and also supported by the Outstanding Youth Foundation
of Shanghai No. slg10015.}

 \thanks{Research of third author supported by the NNSF of China Approved
No.11071074.}

\address{Xiaojun Liu, Department of Mathematics,
University of Shanghai for Science and Technology, Shanghai 200093,
P.R. China} \email{Xiaojunliu2007@hotmail.com}

\address{Shahar Nevo, Department of Mathematics,
Bar-Ilan University, 52900 Ramat-Gan, Israel}
\email{nevosh@macs.biu.ac.il}

\address{Xuecheng Pang, Department of Mathematics,
East China Normal University, Shanghai 200241, P.R.China}
\email{xcpang@math.ecnu.edu.cn}

 %\thanks       {Research supported by the German-Israeli Foundation for Scientific
 %Research and Development, G.I.F. Grant No. I-809-234.6/2003}
 \keywords{Normal family, quasi-normal family, differential
 inequality}
  \subjclass[2010]{30A10, 30D35}

\begin{abstract}
We prove that if $D$ is a domain in $\mathbb C$, $\alpha>1$ and
$C>0$, then the family $\mathcal F$ of functions $f$ meromorphic
in $D$ such that$$\frac{|f'(z)|}{1+|f(z)|^{\alpha}}>C\quad
\text{for every }z\in D$$is normal in $D$. For $\alpha=1$, the
same assumptions imply quasi-normality but not necessarily
normality.
\end{abstract}

 %{30A10, 30D45}

\maketitle

 \section{Introduction}\label{introduction}
  % \numberwithin{equation}{section}
Throughout we use the following notation $D$ denotes a domain in
$\mathbb C$. For $z_0\in\C$ and $r>0$,
$\Delta(z_0,r)=\{z:|z-z_0|<r\}$,
$\Delta'(z_0,r)=\{z:0<|z-z_0|<r\}$,
$\overline{\Delta}(z_0,r)=\{z:|z-z_0|\le r\}$,
$\Gamma(z_0,r)=\{z:|z-z_0|=r\}$ and
$R(z_0,R_1,R_2)=\{z:R_1<|z-z_0|<R_2\}$. We write
$f_n(z)\overset\chi \Rightarrow f(z)$ on $D$ to indicate that the
sequence $\{f_n\}$ converges to $f$  in the spherical metric,
uniformly on compact subsets of $D$, and $f_n\Rightarrow f$ on $D$
if the convergence is in the Euclidean metric. The spherical
derivative is denoted by $f^\#(z)$. We shall also use the notion
of $Q_m-$ normality. For this recall that given a set $E\subset
D$, then the derived set of order $m$ of $E$ with respect to $D$
is defined by induction: $E^{(1)}_D$ is the set of accumulation
points of $E$ in $D$. $E^{(m)}_D=\left(
E^{(m-1)}_D\right)^{(1)}_D$. A family $\mathcal F$ of functions
meromorphic in $D$ is said to be $Q_m-$normal in $D$ if every
subsequence $\{f_n\}^\infty_{n=1}$ of functions from $\mathcal F$
has a subsequence that converges uniformly with respect to $\chi$
on $D\backslash E$, where $E^{(m)}_D=\emptyset$ (Here if $m=0$,
then $\mathcal F$ is in fact normal family and if $m=1$, then
$\mathcal F$ is quasi-normal family). If, in addition there exists
some $\nu\in\mathbb N$, such that $E$ can always be taken to
satisfy $\left|E^{(m-1)}_D\right|\le \nu$, then $\mathcal F$ is
said to be $Q_m-$normal family of order at most $\nu$.

For more about $Q_m-$normality see \cite{1}. This paper deals with
the meaning of some differential inequalities. A natural point of
departure is the following famous criterion of normality due to F.
Marty.

\begin{MartyTheorem*}\cite[p.~75]{6}
A family $\mathcal F$ of functions meromorphic in a domain $D$ is
normal if and only if $\{f^\#(z):f\in\mathcal F\}$ is locally
uniformly bounded in $D$.
\end{MartyTheorem*}

Following Marty's Theorem, L. Royden proved the following generalization.

\begin{TheoremR*}\cite{5} Let $\mathcal F$ be a family of meromorphic
functions in $D$, with the property that for each compact set $K\subset D$,
there is a positive  increasing function $h_K$, such that
$$|f'(z)|\le h_K(|f(z)|)$$for all $f\in\mathcal F$ and $z\in K$, then
$\mathcal F$ is normal in $D$.
\end{TheoremR*}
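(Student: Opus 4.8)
The plan is to derive Theorem~R from Marty's Theorem by contradiction. Since normality is a local property, it suffices to show that for each $z_0\in D$ the spherical derivatives $\{f^\#(z):f\in\mathcal F\}$ are bounded on some neighborhood of $z_0$; by Marty's Theorem this is exactly normality at $z_0$, and normality at every point of $D$ gives normality in $D$. So I would fix $z_0\in D$, choose $r>0$ with $K:=\overline{\Delta}(z_0,r)\subset D$, write $h:=h_K$ for the associated positive increasing function, and assume toward a contradiction that $\mathcal F$ is not normal at $z_0$.

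The crux is a rescaling argument. By Zalcman's lemma --- the standard normalization built on Marty's Theorem --- non-normality at $z_0$ furnishes functions $f_n\in\mathcal F$, points $z_n\to z_0$, and scalars $\rho_n\to 0^+$ with
$$g_n(\zeta):=f_n(z_n+\rho_n\zeta)\overset{\chi}{\Rightarrow}g(\zeta)\quad\text{on }\mathbb C,$$
where $g$ is a \emph{nonconstant} meromorphic function on $\mathbb C$. For any compact $L\subset\mathbb C$ and all large $n$ we have $z_n+\rho_n\zeta\in K$ for $\zeta\in L$, so the differential inequality of the hypothesis, applied to $f_n$ on $K$, transfers to $g_n$:
$$|g_n'(\zeta)|=\rho_n\,|f_n'(z_n+\rho_n\zeta)|\le\rho_n\,h\bigl(|g_n(\zeta)|\bigr),\qquad\zeta\in L.$$

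Now I would use this to force $g$ to be constant, which is the desired contradiction. Fix any $\zeta_0\in\mathbb C$ with $g(\zeta_0)\ne\infty$. Then $g$ is finite and holomorphic on a disk $\overline{\Delta}(\zeta_0,\delta)$, so there the $g_n$ are eventually pole-free and converge to $g$ uniformly in the Euclidean metric; in particular $\sup_{\overline{\Delta}(\zeta_0,\delta)}|g_n|\le M<\infty$ for all large $n$. Since $h$ is increasing, the inequality above with $L=\overline{\Delta}(\zeta_0,\delta)$ gives $|g_n'(\zeta)|\le\rho_n\,h(M)$ on that disk, and letting $n\to\infty$ (recall $\rho_n\to0$) yields $g'\equiv0$ on $\Delta(\zeta_0,\delta)$. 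As $\zeta_0$ ranges over the set $U:=\{\zeta\in\mathbb C:g(\zeta)\ne\infty\}$ --- which is open, dense, and connected, its complement being the discrete pole set of a nonconstant meromorphic function --- we conclude $g'\equiv0$ on $U$, so $g$ is constant on $U$, hence constant on $\overline U=\mathbb C$ by continuity into the sphere. This contradicts the nonconstancy of $g$. Therefore $\mathcal F$ is normal at $z_0$, and since $z_0\in D$ was arbitrary, $\mathcal F$ is normal in $D$.

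I expect the main obstacle to be the rescaling step: producing, from mere non-normality, a blown-up sequence whose limit is genuinely \emph{nonconstant}. This is precisely Zalcman's lemma, which itself rests on Marty's Theorem; once it is in hand the remainder --- transferring the differential inequality to compact sets, passing to the limit at a point where $g$ is finite, and a connectedness argument on $\mathbb C$ --- is routine. The one technical point worth stating carefully is the passage from spherical to Euclidean local uniform convergence near points where the limit is finite, namely the Hurwitz-type observation that the $g_n$ eventually have no poles there, so that the convergence may be read off in the Euclidean metric and differentiated.
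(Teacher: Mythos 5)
The paper does not prove Theorem~R at all --- it is quoted from Royden \cite{5} as a known result, so there is no in-paper argument to compare against. Your proof is correct and complete as it stands: the rescaling identity $g_n'(\zeta)=\rho_n f_n'(z_n+\rho_n\zeta)$ together with the hypothesis gives $|g_n'|\le\rho_n h(|g_n|)$ on compacta, and near any finite point of the limit $g$ the local Euclidean uniform convergence bounds $|g_n|$ by some $M$, whence $|g_n'|\le\rho_n h(M)\to0$ and $g'\equiv0$ off the (discrete) pole set; connectedness of the complement of the poles then forces $g$ to be constant, contradicting Zalcman's lemma. The only points that needed care --- that $h$ is applied only where $g_n$ is finite, that $h(M)<\infty$ since $h$ is a positive increasing function on $[0,\infty)$, and the Hurwitz-type passage from spherical to Euclidean convergence --- are all handled. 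This is in fact essentially Schwick's short proof of Royden's criterion (reference \cite{7} of the paper), and it is also the same rescaling mechanism the authors use in their own proof of Theorem 1(1), where the inequality goes the other way and the rescaled derivative is forced to blow up rather than vanish.
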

This result was significantly extended further in various
directions, see \cite{3}, \cite{7} and \cite{9}.

\medskip

In \cite{2}, J. Grahl and the second author proved a counterpart to Marty's Theorem.
\begin{TheoremGN*}Let $\mathcal F$ be a family of functions meromorphic in $D$ and let
$\ve>0$. If $f^\#(z)\ge\ve$ for every $f\in\mathcal F$ and $z\in
D$, then $\mathcal F$ is normal in $D$.
\end{TheoremGN*}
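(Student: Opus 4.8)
The plan is to derive the statement from Marty's Theorem together with a scale‑sensitive local bound on $f^\#$ that is forced by the geometric meaning of the hypothesis. By Marty's Theorem it suffices to show that $\{f^\#:f\in\mathcal F\}$ is locally uniformly bounded in $D$; since normality is a local property, this follows once one proves that there is a constant $C=C(\varepsilon,r)$, depending only on $\varepsilon$ and on $r>0$, such that whenever $f$ is meromorphic on $\Delta(z_0,r)$ and $|f'(w)|/(1+|f(w)|^2)\ge\varepsilon$ for every $w\in\Delta(z_0,r)$, one has $f^\#(z_0)\le C(\varepsilon,r)$. Applying this at each point of a compact subset of $D$, with a disc of fixed radius contained in $D$, then gives the required uniform bound.

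\emph{Geometric reformulation.} The hypothesis $f^\#\ge\varepsilon>0$ forces $f'$ to be zero‑free and every pole of $f$ to be simple, so $f^\#$ is a strictly positive function of class $C^\infty$ on $D$. With $u:=\log f^\#$, the classical identity $\Delta\log(1+|f|^2)=4(f^\#)^2$ (valid across simple poles as well, upon replacing $f$ by $1/f$) yields
$$\Delta u=-4e^{2u}\quad\text{on }D;$$
equivalently, the conformal metric $\sigma:=(f^\#)^2\,|dz|^2$ has constant Gaussian curvature $+4$, and $f$ is a local isometry of $(D,\sigma)$ onto the Riemann sphere with its spherical metric. In this language the hypothesis reads $\sigma\ge\varepsilon^2\,|dz|^2$, and the task is to bound $\sigma$ at the centre $z_0$.

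\emph{The mechanism.} A conformal metric of curvature $+4$ that is very peaked at a point must decay so rapidly away from it that, if $f^\#(z_0)$ is too large, $\sigma$ must drop below $\varepsilon^2\,|dz|^2$ somewhere inside $\Delta(z_0,r)$, contradicting the hypothesis. The quantitative yardstick is the explicit family of model metrics $\sigma_\lambda=\lambda^2\bigl(1+\lambda^2|z-z_0|^2\bigr)^{-2}|dz|^2$ — the pullback of the spherical metric by $z\mapsto\lambda(z-z_0)$ — for which $\sigma_\lambda(z_0)=\lambda^2|dz|^2$ while $\sigma_\lambda$ already falls below $\varepsilon^2\,|dz|^2$ at Euclidean distance of order $(\varepsilon\lambda)^{-1/2}$ from $z_0$; this alone forces $\lambda\le C(\varepsilon,r)$ with $C$ of order $(\varepsilon r^2)^{-1}$, and the expected bound on $f^\#(z_0)$ is of the same order.

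\emph{The obstacle.} Turning this heuristic into a proof is the main — indeed essentially the only — difficulty, and this is where I expect the real work to lie. The Liouville equation $\Delta u+4e^{2u}=0$ obeys no usable comparison or maximum principle (its nonlinearity is monotone in the unfavourable direction), so one cannot simply dominate $u$ by a model solution; and the soft estimates — bounding $u(z_0)$ by its circular means, using that $u$ is superharmonic and $-u$ subharmonic — only reproduce the trivial lower bound $f^\#\ge\varepsilon$. A genuinely integral‑geometric argument is therefore needed. Since $\sigma\ge\varepsilon^2\,|dz|^2$, the metric $\sigma$ dominates $\varepsilon$ times the Euclidean metric, so $\sigma$‑geodesics issuing from $z_0$ can be followed a definite distance into $\Delta(z_0,r)$, and $f$ carries them to great‑circle arcs on the sphere; combining this with area and Gauss–Bonnet estimates for $\sigma$ on geodesic discs about $z_0$, and with an a priori oscillation bound for $u$ (coming from the equation) that keeps $f^\#$ comparable to $f^\#(z_0)$ on a Euclidean disc of radius of order $1/f^\#(z_0)$ — on which $f$ is then a strongly expanding local diffeomorphism, so that its image is a fixed‑size spherical ball — is meant to force the required decay. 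The remaining technical point is the regime $r\ll 1/\varepsilon$, where a $\sigma$‑geodesic from $z_0$ may exit $\Delta(z_0,r)$ before reaching a conjugate point; that case should be reducible to the previous one precisely via the expansion just described.
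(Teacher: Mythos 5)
Your opening reduction is legitimate: by Marty's Theorem, Theorem GN is indeed \emph{equivalent} to the existence of a bound $f^{\#}(z_0)\le C(\varepsilon,r)$ for all $f$ meromorphic on $\Delta(z_0,r)$ with $f^{\#}\ge\varepsilon$ there. But that is precisely why nothing has been gained: the whole content of the theorem has been repackaged into this quantitative lemma, and the lemma is never proved. Your own paragraph ``The obstacle'' concedes the point — you note correctly that the Liouville equation $\Delta u+4e^{2u}=0$ for $u=\log f^{\#}$ has its nonlinearity in the unfavourable direction, so no comparison with the model metrics $\sigma_\lambda$ is available, and that superharmonicity of $u$ only returns the trivial bound $f^{\#}\ge\varepsilon$ — and the substitute argument (following $\sigma$-geodesics to conjugate points, Gauss--Bonnet and area estimates on geodesic discs, an a priori oscillation bound for $u$ on a Euclidean disc of radius comparable to $1/f^{\#}(z_0)$) is described with ``is meant to force'' and ``should be reducible'' rather than carried out. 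None of those steps is routine: the oscillation bound at scale $1/f^{\#}(z_0)$ is itself a rescaled compactness statement essentially equivalent to what you are trying to prove, and the Gauss--Bonnet step requires control of the injectivity radius and completeness of $\sigma$ inside $\Delta(z_0,r)$ that you do not have. As it stands the proposal is a heuristic with a correctly identified but unfilled gap at its centre.

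For contrast, none of the known proofs goes through curvature estimates. The present paper obtains Theorem GN as the case $\alpha=2$ of Theorem \ref{thm1}(1): from $|f'|/(1+|f|^{2})>C$ one deduces $|f'/f|>C$ everywhere (splitting into $|f|\le1$ and $|f|>1$), so $\{f/f'\}$ is uniformly bounded, hence normal with holomorphic limits; a Pang--Zalcman rescaling at a hypothetical point of non-normality produces both zeros and poles of $f_n$ accumulating there, where $(f_n/f_n')'$ takes the incompatible values $1$ and $-1/k$, a contradiction. The original proof of Grahl and Nevo uses Gu's criterion together with the Zalcman and Pang--Zalcman lemmas, and Steinmetz's proof uses the Schwarzian derivative and linear ODEs. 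If you wish to salvage the differential-geometric route, the missing ingredient is an actual proof of the decay estimate for conformal metrics of curvature $+4$ bounded below by $\varepsilon^{2}|dz|^{2}$; until that is supplied, the argument does not establish the theorem.
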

It is equivalent to say that local uniform boundedness of the spherical
derivatives from zero implies normality.

\medskip

The proof uses mainly Gu's criterion to normality, Zalcman's Lemma
and Pang-Zalcman Lemma. N. Steinmetz \cite{8} gave shorter proof
of Theorem GN, using the Schwarzian derivative and some well-known
facts on linear differential equations. Here in this paper, we
prove a generalization of Theorem GN (with much simpler proof) and
also, for the first time we present a differential inequality that
distinguish between normality to quasi-normality.

\begin{theorem}\label{thm1} Let $0\le\alpha<\infty$ and $C>0$. Let
$\mathcal F_{\alpha,C}(D)$ be the family of all meromorphic functions $f$ in
$D$, such that$$\frac{|f'(z)|}{1+|f(z)|^{\alpha}}>C\quad
\text{for every }z\in D.$$The the following hold:
\begin{enumerate} \item[(1)] If $\alpha>1$, then $\mathcal F_{\alpha,C}(D)$
is normal in $D$;
\item[(2)] If $\alpha=1$, then $\mathcal F_{\alpha,C}(D)$ is quasi-normal in $D$, but
not necessarily normal.
\end{enumerate}
\end{theorem}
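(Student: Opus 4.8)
The plan is to combine the standard rescaling machinery (Zalcman's Lemma and the Pang--Zalcman Lemma) with a Liouville-type obstruction for the resulting limit function. I would first recover Theorem GN as the case $\alpha=0$ to fix the template: if $\mathcal F_{\alpha,C}(D)$ fails to be normal at a point $z_0$, then by the Pang--Zalcman Lemma (applied with the appropriate weight, using that the inequality gives control only on $f'$ and not on $f$ itself) there are $f_n\in\mathcal F_{\alpha,C}(D)$, points $z_n\to z_0$ and scales $\rho_n\to 0^+$ such that $g_n(\zeta):=\rho_n^{\,\beta}f_n(z_n+\rho_n\zeta)$ converges spherically locally uniformly on $\C$ to a nonconstant meromorphic function $g$ of finite order, for a suitable exponent $\beta$ depending on $\alpha$. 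The natural choice is $\beta = 1/(\alpha-1)$ when $\alpha>1$, so that the differential inequality is scaling-invariant in the relevant sense; for $\alpha = 1$ one takes $\beta = 0$, i.e. $g_n(\zeta) = f_n(z_n+\rho_n\zeta)$, which is exactly why that case is genuinely different.

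The heart of the argument for (1) is then to show that the limit $g$ must satisfy a differential inequality that no nonconstant meromorphic function on $\C$ can satisfy. Passing to the limit in $|f_n'(z)| \ge C(1+|f_n(z)|^\alpha) \ge C|f_n(z)|^\alpha$, and using the scaling with $\beta=1/(\alpha-1)$, one should obtain $|g'(\zeta)| \ge C|g(\zeta)|^\alpha$ on $\C$ wherever $g$ is finite (the ``$1+$'' disappears after rescaling since $\rho_n^{\,\beta\alpha - (\beta+1)}\to 0$ or stays bounded according to the sign, which is where $\alpha>1$ is used). Equivalently, away from zeros of $g$, the function $w = 1/g$ (or $w = g^{1-\alpha}$) satisfies $|w'| \ge C|1-\alpha| =: C'>0$ everywhere it is defined, including across the zeros of $g$ by a removable-singularity check; but a meromorphic function on $\C$ whose derivative is bounded away from zero has a derivative that omits the disc $\{|w|<C'\}$, hence (by Picard, or directly by Theorem GN applied to the constant-scaling family, or by the Gu/Miranda-type criterion) $w'$ must be constant, forcing $g$ to be a Möbius-type or exponential-type function that one then checks cannot satisfy the original inequality globally — a contradiction. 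I would organize this as a small self-contained lemma: \emph{if $g$ is meromorphic on $\C$ and $|g'(\zeta)|\ge C|g(\zeta)|^\alpha$ for all $\zeta$ with $\alpha>1$, then $g$ is constant.} The one delicate point is making the limit transition rigorous near the poles of $g$, which is handled by the usual trick of working locally with $1/g_n$ where $g$ has poles.

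For part (2), normality fails, so I would exhibit an explicit family: taking $f_n(z) = e^{nz}$ on $D = \Delta(0,1)$ (or a suitable variant scaled so that $|f_n'|/(1+|f_n|) > C$ holds — note $|f_n'(z)|/(1+|f_n(z)|) = n|e^{nz}|/(1+|e^{nz}|) \ge n/2 \cdot \min(1,|e^{nz}|)$, which is $>C$ on all of $D$ once $n$ is large if $D$ is chosen so that $\Re z$ is bounded below, e.g. a half-disc, and one then uses a Möbius change of variable to get a genuine domain), the family $\{f_n\}$ is not normal at $0$ but is quasi-normal since $e^{nz}\to\infty$ locally uniformly on $\{\Re z>0\}$ and $\to 0$ on $\{\Re z<0\}$, with the single exceptional line — after restricting to a small disc, a single exceptional point. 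Some care is needed to make the example live on an honest domain where the inequality holds at \emph{every} point; I would state it for a disc tangent to the imaginary axis or compose with $z\mapsto 1/z$, and verify the inequality by the elementary estimate above. For the positive assertion that $\alpha=1$ still yields quasi-normality, I would run the same rescaling argument: non-quasi-normality at $z_0$ means, after the Pang--Zalcman Lemma with $\beta=0$, that $f_n(z_n+\rho_n\zeta)\overset\chi\Rightarrow g$ nonconstant on $\C$ with $|g'(\zeta)| \ge C|g(\zeta)|$ (the ``$1+$'' now survives but can only help), and again the auxiliary function $\log g$ or $1/g$ has derivative omitting a disc, forcing a contradiction with $g$ being nonconstant of finite order \emph{with infinitely many poles or zeros} — the subtlety being that $g = e^{c\zeta}$ does satisfy $|g'|\ge C|g|$, which is precisely why one gets only quasi-normality and not normality, and one must check that the possible limit functions are all of this exponential type (no poles), so that the exceptional set in the definition of quasi-normality is controlled.

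The main obstacle I anticipate is the rigorous passage to the limit in the differential inequality across poles of $g$ and the correct bookkeeping of the scaling exponents (ensuring the $1+|f|^\alpha$ really does reduce to $|f|^\alpha$ after rescaling when $\alpha>1$, and identifying exactly which degenerate limit functions survive when $\alpha=1$); everything else is a routine application of the Zalcman/Pang--Zalcman rescaling lemmas together with the elementary Liouville lemma on meromorphic functions with derivative bounded below.
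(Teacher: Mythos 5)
Your overall strategy (rescale, pass to a limit inequality, kill the limit function) is in the right spirit, but several of its key steps fail as written, and the paper's actual proof avoids each of these pitfalls by different means. For part (1): choosing $\beta=\frac{1}{\alpha-1}$ exactly forces you to prove the Liouville-type lemma ``$|g'|\ge C|g|^{\alpha}$ on $\C$ implies $g$ constant,'' and your sketch of that lemma does not work: $w=g^{1-\alpha}$ is multivalued for non-integer $\alpha$ (so ``$|w'|\ge C'$ everywhere'' is not a statement about a single meromorphic function on $\C$), while $w=1/g$ yields $|w'|=|g'|/|g|^{2}\ge C$ only when $\alpha=2$. The paper sidesteps the lemma entirely by taking $\beta>\frac{1}{\alpha-1}$, so that $1+\beta-\beta\alpha<0$ and $|g_n'(\zeta_0)|=\rho_n^{1+\beta-\beta\alpha}\,\bigl|f_n'/f_n^{\alpha}\bigr|\,|\rho_n^{\beta}f_n|^{\alpha}\to\infty$ at any point with $g(\zeta_0)\ne 0,\infty$ --- an immediate contradiction with $g_n'\to g'$. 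Worse, in the presence of poles the Pang--Zalcman lemma requires the exponent $-\beta$ to satisfy $-\beta>-p$; with simple poles ($p=1$) this forces $\beta<1$, which is incompatible with $\beta\ge\frac{1}{\alpha-1}$ for every $1<\alpha\le 2$. So your rescaling is simply unavailable in the meromorphic case, and you offer no substitute. The paper's pole case runs on an entirely different idea you are missing: $|f_n'/f_n|>C$ makes $\{f_n/f_n'\}$ normal with holomorphic limit $H$, and $(f_n/f_n')'$ equals $1$ at every zero of $f_n$ but $-1/k$ at every pole of order $k$; since zeros and poles both accumulate at $z_0$, no value of $H'(z_0)$ is consistent.

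For part (2) the gaps are more serious. With $\beta=0$ you have $g_n'(\zeta)=\rho_n f_n'(z_n+\rho_n\zeta)$, so the hypothesis $|f_n'|>C(1+|f_n|)$ only gives $|g_n'|>C\rho_n(1+|g_n|)$, which degenerates to $|g'|\ge 0$ in the limit; your claimed inequality $|g'|\ge C|g|$ for the limit function is false, and with it the classification of limits as exponentials collapses. You also give no mechanism for proving that the set of non-normality points is discrete, which is the actual content of quasi-normality. The paper's route is: normality of $\{f_n'\}$ (from $|f_n'|>C$) together with the ``if'' direction of Pang--Zalcman applied to $g_n=\rho_n^{-1/2}f_n(z_n+\rho_n\zeta)$ forces the limit $g$ to be linear, hence by Rouch\'e $f_n$ has zeros near every point of non-normality; if such points accumulated at $z_0$, the holomorphic limit $H$ of $f_n/f_n'$ would vanish identically near $z_0$, contradicting $(f_n/f_n')'=1$ at each zero of $f_n$. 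Finally, your counterexample $e^{nz}$ does not work: the inequality $|f_n'|/(1+|f_n|)>C$ fails for large $n$ wherever $\Re z<0$, and on $\{\Re z>0\}$ the family is normal (it tends to $\infty$), so no honest domain does the job; the paper's example $\{nz\}$ on a disc about $0$ satisfies $|f_n'|/(1+|f_n|)=n/(1+n|z|)\ge\frac12$ and is visibly non-normal at $0$.
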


\medskip

In section 2, we prove Theorem \ref{thm1}. In section 3, we show that $\mathcal F_{1,C}(D)$ can be
of infinite order and discuss the validity of Theorem \ref{thm1} for $\alpha<1$. In section 4,
we discuss the reverse inequality $\DF{|f'(z)|}{1+|f(z)|^{\alpha}}<C$.

\section{Proof of Theorem \ref{thm1}}

We first state explicity the famous lemma of Pang and Zalcman (that was already mentioned).
Observe that this lemma is ``if and only if".

\begin{lemma}\label{lemma1}\cite{4} Let $\mathcal F$ be a family of meromorphic functions in
a domain $D$, all of whose zeros have multiplicity at least $m$, and all of whose poles
have multiplicity at least $p$, and let $-p<\alpha<m$. Then $\mathcal F$ is not normal at some
$z_0\in D$ if and only if there exist sequences $\{f_n\}^\infty_{n=1}\subset\mathcal
F$, $\{z_n\}^\infty_{n=1}\subset D$, $\{\rho_n\}^\infty_{n=1}\subset(0,1)$, such that
$\rho_n\to0^+$, $z_n\to z_0$ and$$g_n(\zeta):=\rho^{-\alpha}_nf_n(z_n+\rho_n\zeta)
\overset\chi\Rightarrow g(\zeta)\quad\text{on}\quad\mathbb C,$$where $g$ is a
nonconstant meromorphic function in $D$.
\end{lemma}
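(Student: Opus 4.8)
The plan is to prove both implications, and at the outset to reduce the pole case to the zero case. Since $g_n^\#=(1/g_n)^\#$ and $1/g_n(\zeta)=\rho_n^{\alpha}\,(1/f_n)(z_n+\rho_n\zeta)$, replacing $f_n$ by $1/f_n$ is the same normalization with $\alpha$ replaced by $-\alpha$ and with the roles of $m$ and $p$ interchanged. Hence it suffices to treat the statement on the zero side, where the relevant inequality is $\alpha<m$; the pole side, governed by $\alpha>-p$, then follows by this duality. The computation underlying everything is
\[
g_n^\#(\zeta)=\frac{\rho_n^{\,1+\alpha}\,|f_n'(z_n+\rho_n\zeta)|}{\rho_n^{\,2\alpha}+|f_n(z_n+\rho_n\zeta)|^2},
\]
which follows from $g_n'(\zeta)=\rho_n^{1-\alpha}f_n'(z_n+\rho_n\zeta)$ after clearing denominators.

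For the ``if'' direction I would argue by contradiction: assume $\mathcal F$ is normal at $z_0$, so that, after passing to a subsequence, $f_n\overset\chi\Rightarrow F$ near $z_0$ with $F$ meromorphic or $\equiv\infty$, and along this subsequence $g_n\to g$ still. I would split on the value $b=F(z_0)\in\widehat{\mathbb C}$. If $b\notin\{0,\infty\}$, then Marty's Theorem gives $f_n^\#\le M$ near $z_0$, hence $|f_n'|\le M(1+|f_n|^2)$; substituting this into the displayed formula and using $f_n(z_n+\rho_n\zeta)\to b\ne 0$ shows $g_n^\#(\zeta)\to0$ for each fixed $\zeta$ (this works for every real $\alpha$), so $g^\#\equiv0$ and $g$ is constant, a contradiction. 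The case $b=\infty$ reduces to $b=0$ by the duality above, so the crux is $b=0$. Here I would use that every zero of $f_n$ has multiplicity $\ge m>\alpha$: comparing $\rho_n^{-\alpha}|f_n(z_n+\rho_n\zeta)|$ with the local degree of $f_n$ at its vanishing forces $g\equiv0$ precisely because the exponent $m-\alpha$ is positive. The delicate point is that mere uniform convergence $f_n\to F$ leaves an additive error that the factor $\rho_n^{-\alpha}$ could amplify, so it is the multiplicity hypothesis, not just the convergence, that controls $f_n$ near its zeros and makes $\alpha<m$ decisive.

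For the ``only if'' direction, which is the substantial one, I would start again from Marty's Theorem: non-normality at $z_0$ produces $f_n\in\mathcal F$ and $w_n\to z_0$ with $f_n^\#(w_n)\to\infty$. The heart is a Zalcman--Pang rescaling selection adapted to the weight $\rho^{-\alpha}$: working on a fixed closed disk about $z_0$, one chooses by a maximality argument the center $z_n$ and the scale $\rho_n$ so that the normalized function $g_n(\zeta)=\rho_n^{-\alpha}f_n(z_n+\rho_n\zeta)$ satisfies $g_n^\#(0)=1$ and $g_n^\#(\zeta)\le 1+o(1)$ on every disk $|\zeta|\le R$. Marty's Theorem then makes $\{g_n\}$ normal, a locally uniform subsequential limit $g$ exists on $\mathbb C$, and $g^\#(0)=1$ forces $g$ nonconstant; the same construction yields $z_n\to z_0$ and $\rho_n\to0^+$. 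The constraint $-p<\alpha<m$ is exactly what legitimizes the selection, keeping $\rho_n\to0^+$ and preventing the rescaled limit from degenerating at a zero (multiplicity $\ge m$) or a pole (multiplicity $\ge p$).

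I expect the main obstacle to be this weighted selection in the ``only if'' part, namely verifying the uniform bound $g_n^\#\le 1+o(1)$ on growing disks together with $\rho_n\to0^+$, where the strict inequalities $\alpha<m$ and $\alpha>-p$ are used in an essential way. The secondary difficulty is the $b=0$ (hence, by duality, $b=\infty$) case of the ``if'' direction, where the additive-error issue described above must be absorbed by the local zero/pole structure rather than by convergence alone.
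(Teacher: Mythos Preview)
The paper does not give its own proof of this lemma. Immediately after the statement it only remarks that the ``if'' direction holds under the weaker hypothesis that $g_n$ converges on some open set $\Omega\subset\mathbb C$, and then refers the reader to \cite{2} for a full proof (the result itself being attributed to \cite{4}). So there is no in-paper argument to compare your proposal against.

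That said, your outline is the standard Pang--Zalcman route and is essentially sound. The duality $f\mapsto 1/f$, $\alpha\mapsto-\alpha$, $m\leftrightarrow p$ is exactly how the pole constraint $\alpha>-p$ is reduced to the zero constraint $\alpha<m$, and your description of the ``only if'' direction---Marty's theorem to produce blow-up of $f_n^\#$, followed by a maximality selection of $(z_n,\rho_n)$ so that $g_n^\#(0)=1$ and $g_n^\#\le 1+o(1)$ on growing disks---is precisely the mechanism in \cite{4}. You have also correctly located where the strict inequalities $-p<\alpha<m$ enter.

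One comment on the ``if'' direction: your case $b=F(z_0)\notin\{0,\infty\}$ is fine (indeed, then $g_n(\zeta)=\rho_n^{-\alpha}f_n(z_n+\rho_n\zeta)$ tends to $0$, $b$, or $\infty$ according to the sign of $\alpha$, hence $g$ is constant without even invoking $g_n^\#$). In the case $b=0$ your sentence ``comparing $\rho_n^{-\alpha}|f_n|$ with the local degree of $f_n$ at its vanishing forces $g\equiv 0$'' is the right intuition but would need to be made precise: one has to control $f_n$ on shrinking disks of radius $\sim\rho_n$, not merely at the center, and the multiplicity hypothesis on the $f_n$ (not on the limit $F$) is what does this. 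You flag this yourself as the ``delicate point,'' and it is; a clean way to finish is to note that for a normal family one can write $f_n(z)=(z-a_n)^{m}h_n(z)$ near each zero with $h_n$ locally uniformly bounded, and then $\rho_n^{-\alpha}(z_n+\rho_n\zeta-a_n)^m\to 0$ because $m>\alpha$.
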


Here the ``if" direction holds if $g_n(\zeta)$ converges in some
open set $\Omega\subset\mathbb C$ to a nonconstant meromorphic
function $g$ in $\Omega$. For a full proof of this lemma see
\cite{2}.

\subsection{Proof of (1) of  Theorem \ref{thm1}}

Let $\{f_n\}^\infty_{n=1}$ be a sequence of functions in $\mathcal
F_{\alpha,C}(D)$. Let $z_0\in D$ and assume by negation that
$\{f_n\}$ is not normal at $z_0$. Suppose that there exist $r>0$,
such that each $f_n$ is holomorphic in $\Delta(z_0,r)$. We take
$\beta>\DF1{\alpha-1}>0$. By Lemma \ref{lemma1}, there exist a
subsequence of $\{f_n\}$, that without loss of generality will
also be denoted by $\{f_n\}^\infty_{n=1}$ and sequences
$\rho_n\to0^+$, $z_n\to z_0$, such that
\begin{equation}
\label{1}
g_n(\zeta):=\rho^{\beta}_nf_n(z_n+\rho_n\zeta)
\overset\chi\Rightarrow g(\zeta)\quad\text{on}\quad\mathbb C,
\end{equation}
where $g$ is a nonconstant entire function in $\mathbb C$.

Taking $\zeta_0\in\mathbb C$, such that
\begin{equation}
\label{2}
g(\zeta_0)\ne0,\ \infty.
\end{equation}
Then by  \eqref{1}, \eqref{2} and the value of $\beta$, we get that for large enough $n$, $$\begin{aligned}&\rho^{\beta+1}_n\left|f'_n(z_n+\rho_n\zeta_0)\right|=\rho^{1+\beta-\beta\alpha}_n\left|\frac{f'_n(z_n+\rho_n\zeta_0)}
{f^\alpha_n(z_n+\rho_n\zeta_0)}\right||\rho^\beta_nf_n(z_n+\rho_n\zeta_0)|^\alpha\\
&>\rho^{1+\beta-\beta\alpha}_n\cdot C\frac{|g(\zeta_0)|^\alpha}2\underset{n\to\infty}\longrightarrow\infty.
\end{aligned}$$
We thus got a contradiction and the holomorphic case is proven.

Suppose now that there is no $r>0$, such that for infinitely many
indices $n$, $f_n$ is holomorphic in $\Delta(z_0,r)$. Hence we
deduce the existence of some subsequence of
$\{f_n\}^\infty_{n=1}$, that without loss of generality will also
be denoted by $\{f_n\}^\infty_{n=1}$, and a sequence $z_n\to z_0$,
such that $f_n(z_n)=\infty$ (otherwise we are again in the
holomorphic case and we are done). We can also assume (after
moving to subsequence of $\{f_n\}^\infty_{n=1}$...) that there
exist a sequence $\widetilde{z}_n\to z_0$, such that
$f_n(\widetilde{z}_n)=0$. Indeed, otherwise, for some $\delta>0$
and large enough $n$, $f_n\ne0$ in $\Delta(z_0,\delta)$ and
$|f'_n|>C$ there. Then by Gu's criterion we deduce that $\{f_n\}$
is normal.
\begin{claim*}$\left\{\DF{f_n}{f'_n}\right\}^\infty_{n=1}$ is normal in $D$.
\end{claim*}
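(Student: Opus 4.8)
The plan is to prove that $\left\{\frac{f_n}{f'_n}\right\}_{n=1}^\infty$ is normal in $D$ by verifying the hypotheses of Marty's Theorem, i.e. by showing that the spherical derivatives of $h_n := f_n/f'_n$ are locally uniformly bounded. First I would compute $h_n' = 1 - \frac{f_n f_n''}{(f_n')^2}$, so that the zeros of $h_n$ are exactly the poles of $f_n$ (each $h_n$ is meromorphic, its poles being the zeros of $f_n$, with the usual cancellation at common zeros of $f_n$ and $f_n'$). The key point is that the differential inequality $|f_n'| > C(1 + |f_n|^\alpha)$ with $\alpha = 1$ gives $|f_n'| > C|f_n|$, i.e. $|h_n(z)| = \left|\frac{f_n(z)}{f_n'(z)}\right| < \frac1C$ at every point where $h_n$ is finite; combined with the fact that the poles of $h_n$ are isolated, this means $h_n$ omits a neighborhood of $\infty$ and so, on each small disk, either $h_n$ is holomorphic and bounded by $1/C$, or $h_n$ has poles but maps into $\{|w| < 1/C\} \cup \{\infty\}$.

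Since a bound $|h_n| < 1/C$ on a disk immediately gives normality of the holomorphic pieces (Montel), the whole task reduces to handling a sequence of $h_n$ that have poles accumulating at some $z_0 \in D$. Here I would argue by contradiction using the Pang--Zalcman Lemma (Lemma \ref{lemma1}) with $\alpha = 0$: if $\{h_n\}$ is not normal at $z_0$, there are $\rho_n \to 0^+$, $w_n \to z_0$ with $H_n(\zeta) := h_n(w_n + \rho_n \zeta) \overset{\chi}{\Rightarrow} H(\zeta)$ on $\mathbb C$, where $H$ is a nonconstant meromorphic function. The limit function $H$ must then take some finite value $H(\zeta_0)$ with $|H(\zeta_0)| < 1/C$ is automatic, but more importantly, by Hurwitz, since each $H_n$ omits the value region $\{1/C \le |w| < \infty\}$ (taking only values in the closed-up disk of radius $1/C$), $H$ also maps $\mathbb C$ into $\overline{\Delta}(0, 1/C) \cup \{\infty\}$; a nonconstant meromorphic function whose image misses an annulus neighborhood of $\infty$ except possibly at isolated poles is either a constant or a nonconstant holomorphic function bounded by $1/C$ on $\mathbb C$ — the latter is impossible by Liouville, and the former contradicts nonconstancy. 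This yields the contradiction and shows $\{h_n\}$ is normal at $z_0$, hence everywhere.

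The main obstacle is the bookkeeping around the poles of $h_n$ (the zeros of $f_n$) when passing to the rescaled limit: one must be careful that the Hurwitz-type argument is applied correctly to the values \emph{omitted} rather than the values attained, and that the inequality $|f_n'| > C|f_n|$ is used in the form ``$h_n$ never takes a value in the annulus $\{1/C \le |w| < \infty\}$'' — equivalently $1/h_n = f_n'/f_n$ is holomorphic wherever $f_n \ne 0$ and there satisfies $|1/h_n| > C$, so $1/h_n$ omits $\overline\Delta(0,C)$ and is thus normal by Montel's theorem in its strong form, which then transfers back to normality of $h_n$. In fact this last observation gives a cleaner route: apply Montel's theorem directly to $\{1/h_n\} = \{f_n'/f_n\}$, a family of meromorphic functions each omitting the disk $\overline\Delta(0, C)$ (and omitting $\infty$ wherever meaningful is not needed since Montel allows poles), conclude $\{f_n'/f_n\}$ is normal, and therefore $\{f_n/f_n\}$ — I mean $\{f_n/f_n'\}$ — is normal as the reciprocal. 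Either way, the content is that the inequality forces $f_n'/f_n$ to omit a fixed disk, and Montel finishes it.
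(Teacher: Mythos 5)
Your final ``cleaner route'' is exactly the paper's argument: the two-case estimate ($|f_n|\le 1$ versus $|f_n|>1$) shows $|f_n'/f_n|>C$ throughout $D$, so $\{f_n'/f_n\}$ omits a disk and is normal by Montel, hence so is the reciprocal family $\{f_n/f_n'\}$. (One small slip worth fixing: the zeros of $f_n$ are \emph{zeros}, not poles, of $h_n=f_n/f_n'$, since $f_n'$ cannot vanish there; in fact $h_n$ is holomorphic and bounded by $1/C$ on all of $D$, which makes the Pang--Zalcman detour in your first paragraph unnecessary.)
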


\textit{Proof of Claim}. If $|f_n(z)|\le1$, then $\left|\DF{f'_n(z)}{f_n(z)}\right|\ge|f'_n(z)|>C$.
If $|f_n(z)|>1$, then $\left|\DF{f'_n(z)}{f_n(z)}\right|\ge
\DF{|f'_n(z)|}{1+|f_n(z)|}>\DF{|f'_n(z)|}{1+|f_n(z)|^\alpha}>C$. Thus in any case, $\left|\DF{f'_n(z)}{f_n(z)}\right|>C$
for every $n$ and every $z\in D$. Hence $\left\{\DF{f'_n}{f_n}\right\}^\infty_{n=1}$ is normal and so is
$\left\{\DF{f_n}{f'_n}\right\}^\infty_{n=1}$.

\medskip

According to the claim, we can assume (after moving to subsequence
of $\{f_n\}^\infty_{n=1}$...) that
$$\frac{f_n(z)}{f'_n(z)}\overset\chi\Rightarrow H(z)\quad\text{in}\quad D.$$Since $\DF{f_n}{f'_n}$
vanish at the zeros and at the poles of $f_n$, we deduce that $H$ is holomorphic in $D$. We have
$$\left(\DF{f_n}{f'_n}\right)'=1-\frac{f_nf''_n}{f^{'2}_n}.$$Thus we have
$$\left(\DF{f_n}{f'_n}\right)'\Bigg|_{z=\widetilde{z}_n}=1.$$At the poles $z_n$ of $f_n$, the situation
is different. Each $z_n$ is a pole of order $k=k_n$ of $f_n$. This means that in some neighborhood of $z_n$, we have
$$f_n(z)=\frac{a_{-k}}{(z-z_n)^k}+\frac{a_{-k+1}}{(z-z_n)^{k-1}}+\cdots\quad
\quad(a_{-k}\ne0).$$Thus$$f'_n(z)=\frac{-ka_{-k}}{(z-z_n)^{k+1}}+\cdots,\quad\text{and}\quad f''_n(z)=\frac{k(k+1)a_{-k}}{(z-z_n)^{k+2}}+\cdots.$$
We then get that$$\frac{f_nf''_n}{f'^2_n}\Bigg|_{z=z_n}=\frac{k(k+1)a^2_{-k}}{(ka_{-k})^2}=\frac{k+1}k=1+\frac1k,$$and so
$$\left(\DF{f_n}{f'_n}\right)'\Bigg|_{z=z_n}=1-\left(1+\frac1k\right)=-\frac1k.$$Since $z_n\to z_0$ and also
$\widetilde{z}_n\to z_0$, we get a contradiction to any possible value of $H'(0)$. This completes the proof of (1).

\subsection{Proof of (2) of  Theorem \ref{thm1}}

The family $\{nz:\ n\in\mathbb N\}$ which is not normal at $z=0$, shows that local uniform boundedness of $\left\{\DF{|f'|}{1+|f|}:\ f\in\mathcal F\right\}$
does not imply in general normality. In order to prove quasi-normality, observe first that for every $f\in\mathcal F_{1,C}(D)$, we have
$\left|\DF{f'}{f}\right|>C$ and also $|f'|>C$. Thus both $\{f':\ f\in\mathcal F_{1,C}(D)\}$ and $\{f'/f:\ f\in\mathcal F_{1,C}(D)\}$ are
normal in $D$.

Let us take now a sequence $\{f_n\}^\infty_{n=1}$ of functions
from $\mathcal F_{1,C}(D)$. If, by negation $\{f_n\}_n$ is not
normal at some $\widehat{z}_0\in D$, then we can assume (after
moving to subsequence...) that there exist $z_n\to\widehat{z}_0$,
and $\rho_n\to0^+$ and a nonconstant function $g$, meromorphic in
$\mathbb C$ such that
$$g_n(\zeta)=\rho^{-\frac12}_nf_n(z_n+\rho_n\zeta)\overset{\chi}\Rightarrow g(\zeta)\quad\text{on}\quad\mathbb C.$$
Let $P_g$ denotes the set of poles of $g$ in $\mathbb C$. If $g$ is not of the form $g(\zeta)=a\zeta+b$, then we get by
differentiation,
$$g'_n(\zeta)=\rho^{\frac12}_nf'_n(z_n+\rho_n\zeta)\Rightarrow g'(\zeta)\quad\text{on}\quad\mathbb C\backslash P_g.$$
The derivative $g'$ is nonconstant, and thus by Lemma
\ref{lemma1}, $\{f'_n\}_{n=1}^\infty$ is not normal at
$\widehat{z}_0$, a contradiction.

\medskip

Thus, we must have $g(\zeta)=a\zeta+b$ $(a\ne0)$ and by Rouche's
Theorem, for any neighborhood $U$ of $\widehat{z}_0$, $f_n$ has
for large enough $n$ a zero in $U$. This means that we can assume
(after moving to subsequence...) that there exists a sequence
$z^\ast_n\to\widehat{z}_0$, such that $f_n(z^\ast_n)=0$.

Now, suppose by negation that $\{f_n\}^\infty_{n=1}$ is not
quasinormal at some $z_0\in D$. After moving to subsequence, that
will also be called $\{f_n\}^\infty_{n=1}$, we can assume that
there exist a sequence $\{z_k\}^\infty_{k=1}$ of distinct points
in $D$, such that $z_k\underset{k\to\infty}\longrightarrow z_0$
and each subsequence of $\{f_n\}^\infty_{n=1}$ is not normal at
each $z_k$. According to the previous discussion, for every
$k=1,2,\cdots$, there exists $n_k$ and a sequence
$\{z_{k,n}\}^\infty_{n=n_k}$,
$z_{k,n}\underset{n\to\infty}\longrightarrow z_k$, such that
$f_n(z_{k,n})=0$ for every $n\ge n_k$.

Hence for every $\delta>0$, and for every $N\in\mathbb N$, $f_n$
has in $\Delta(z_0,\delta)$ at least $N$ zeros for large enough
$n$. Now, since $\left\{\DF{f_n}{f'_n}:n\in\mathbb
N\right\}^\infty_{n=1}$ is normal, we can also assume (after
moving to subsequence...) that
$$\frac{f_n(z)}{f'_n(z)}\Rightarrow H(z)\quad\text{in}\quad D,$$where $H$ is holomorphic in $D$. Each zero of $f_n$ is also a zero of
$f_n/f'_n$, so by the above discussion the number of zeros of $f_n$ in any neighborhood of $z_0$ tends to $\infty$, as
$n\to\infty$, and thus we conclude that $H\equiv0$. Hence we have
$$\left(\frac{f_n}{f'_n}\right)'\Rightarrow0\quad\text{in}\quad D.$$
But on the other hand,
$$\left(\frac{f_n}{f'_n}\right)'\Bigg|_{z=z_{k,n}}=1.$$This is a contradiction and (2) of Theorem \ref{thm1} is proven.

\section{Some remarks}
\subsection{The order of quasi-normality of $\mathcal F_{1,C}(D)$}

We shall show now that the order of quasi-normality of $\mathcal F_{1,C}(D)$ can general be
large as we we like. Since we can make a linear change of the variable, it is enough if we construct in some
specific domain $D$, a sequence $\{f_n\}^\infty_{n=1}$ of functions such that every subsequence of  $\{f_n\}^\infty_{n=1}$
has the same infinite set of points of non-normality in $D$, and
$$\inf\limits_{z\in D}\frac{|f'_n(z)|}{1+|f_n(z)|}\ge C\quad\text{for some}\quad C>0.$$
So let $D=\left\{z:|\Im z|<1,\ |z-\pi k|>\DF12,\ k\in\mathbb Z\right\}$, and define for every $n\ge1$, $f_n(z)=n\cos z$.
It is obvious that every subsequence of $\{f_n\}^\infty_{n=1}$ is not normal exactly at the points $z_k=\DF\pi2+\pi k$, $k\in
\mathbb Z$. Thus $\{f_n\}^\infty_{n=1}$ is quasi-normal of infinite order in $D$. Because of the periodicity of $\cos z$, there
exist some $C>0$, such that
$$\frac{|f'_n(z)|}{1+|f_n(z)|}\ge C\quad\text{for every}\ n\ \text{and for every}\ z\in D.$$
Hence $\mathcal F_{1,C}(D)$ is quasi-normal of infinite order in
$D$. We deduce that for every domain $D$, and for every
$\nu\in\mathbb N$ there exists $C_{D,\nu}>0,$ such that $\mathcal
F_{1,C_{D,\nu}}(D)$ is quasi-normal   in $D$, but not quasi-normal
of order at most $\nu$.

\subsection{The case $0\le\alpha<1$}
In this case for every bounded domain $D$ and every $C>0$,
$\mathcal F_{\alpha,C}(D)$ has no degree of normality. To be more
precise we have the following theorem.
\begin{theorem}\label{thm3} Let $0\le\alpha<1$, $m\ge0$, $C>0$ and $D$ a bounded domain in $D$.
Then $\mathcal F_{\alpha,C}(D)$ is not $Q_m-$normal in $D$.
\end{theorem}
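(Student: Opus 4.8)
The plan is to construct, for every bounded domain $D$, every $0\le\alpha<1$, every $m\ge 0$ and every $C>0$, a sequence $\{f_n\}$ in $\mathcal F_{\alpha,C}(D)$ that witnesses failure of $Q_m$-normality, i.e. a sequence for which no subsequence converges $\chi$-uniformly off any set $E$ with $E^{(m)}_D=\emptyset$. The cleanest route is to produce a single sequence whose set of non-normality points is all of $D$ (or at least a set with nonempty $m$-th derived set for every $m$, e.g.\ a set dense in some subdomain). A natural candidate is $f_n(z)=nz$ composed with a fixed conformal-type map, or more simply functions of the form $f_n(z)=c_n g(z)$ with $|c_n|\to\infty$ and $g$ a fixed nonconstant entire function, since then $|f_n'|/(1+|f_n|^\alpha)=|c_n||g'|/(1+|c_n|^\alpha|g|^\alpha)\sim |c_n|^{1-\alpha}|g'|/|g|^\alpha\to\infty$ locally uniformly away from zeros of $g$, using $\alpha<1$. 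First I would choose $g$ (for instance a polynomial, or $g(z)=z$ if $0\in D$, handled by translation) and verify the differential inequality holds on all of $D$ for large $n$ after discarding finitely many terms, or rescale to get it for all $n$; then observe $\{f_n\}=\{c_n g\}$ is not normal at any point where $g$ does not vanish, hence non-normal on a set whose closure is $D$.

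The key steps, in order: (i) reduce to a bounded domain not containing the (finitely many) zeros of the chosen $g$, or simply take $g$ zero-free on $D$ (e.g.\ $g(z)=e^{z}$ or a constant-free affine function after translating so that $0\notin D$); (ii) check that with $f_n=c_n g$ and suitable $|c_n|\to\infty$ one has $\inf_{z\in D}|f_n'(z)|/(1+|f_n(z)|^\alpha)\ge C$ — here one uses boundedness of $D$ to bound $|g|$ and $|g'|$ above and $|g|, |g'|$ below on $\overline D$, and the exponent $1-\alpha>0$ to absorb the constant $C$ by taking $|c_n|$ large (then reindex so the inequality holds for every $n$); (iii) show $\{f_n\}$ is not normal at any $z\in D$: since $|c_n|\to\infty$ and $g(z)\ne 0$, $f_n(z)\to\infty$ while $f_n$ takes the value $0$ nowhere but $f_n^{\#}$ blows up — more carefully, on any disc about $z_0\in D$ the functions $c_n g$ cannot have a $\chi$-convergent subsequence because they tend locally uniformly to $\infty$ yet $(c_n g)'=c_n g'\not\to 0$, contradicting normality with limit $\equiv\infty$ (this is the standard argument, cf.\ the treatment of $\{nz\}$ in the proof of part (2) of Theorem \ref{thm1}); (iv) conclude that any exceptional set $E$ for a would-be convergent subsequence must contain $D$, so $E$ is dense in $D$ and therefore $E^{(m)}_D$ is never empty for any $m$ — indeed $E=D$ forces $E^{(1)}_D=D$ and inductively $E^{(m)}_D=D\ne\emptyset$.

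The main obstacle is step (ii): making the differential inequality hold on \emph{all} of $D$ simultaneously with non-normality \emph{everywhere} in $D$. Using $g$ with no zeros in $D$ (possible by translating so $0\notin D$ and taking $g(z)=z$, or by using $g=e^z$) removes the competing requirement that non-normality be "spread out," since $\{c_n z\}$ with $|c_n|\to\infty$ is non-normal at every point of $\C\setminus\{0\}$; then boundedness of $D$ gives $0<m_1\le|z|\le M_1$ and $|z'|\equiv 1$ on $\overline D$, so $|f_n'(z)|/(1+|f_n(z)|^\alpha)=|c_n|/(1+|c_n|^\alpha|z|^\alpha)\ge |c_n|/(1+|c_n|^\alpha M_1^\alpha)\to\infty$ uniformly, and one simply picks $|c_n|$ so large that this quantity exceeds $C$ for every $n$. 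With that choice $\{f_n\}\subset\mathcal F_{\alpha,C}(D)$, it is non-normal throughout $D$, and the $Q_m$-normality failure follows as in step (iv). I expect no serious difficulty beyond bookkeeping; the only point needing care is the verification that "non-normal everywhere" truly obstructs $Q_m$-normality for all $m$, which is immediate once one notes that the exceptional set must then be all of $D$ and $D^{(m)}_D=D$.
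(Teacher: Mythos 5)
Your construction fails at step (iii), and the failure is fatal to the whole plan. If $g$ is zero-free on $\overline D$ (e.g.\ $g(z)=z$ with $0\notin\overline D$) and $|c_n|\to\infty$, then $|f_n(z)|=|c_n||g(z)|\ge |c_n|\,m_1\to\infty$ uniformly on $D$, so $f_n\overset{\chi}{\Rightarrow}\infty$ on $D$. For meromorphic (or holomorphic) families, locally uniform convergence to the constant $\infty$ in the spherical metric is a perfectly admissible limit, so $\{f_n\}$ is \emph{normal} on all of $D$ — there is no requirement that $f_n'\to 0$. One can also see this via Marty's Theorem: $f_n^{\#}(z)=|c_n||g'(z)|/(1+|c_n|^2|g(z)|^2)\le |c_n|\max|g'|/(|c_n|^2 m_1^2)\to 0$, so the spherical derivatives are uniformly bounded (indeed tend to $0$). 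Your parenthetical claim that $\{c_nz\}$ is non-normal at every point of $\C\setminus\{0\}$ is exactly backwards: it is non-normal \emph{only} at $0$, which is precisely why the paper cites $\{nz\}$ in the proof of part (2) as non-normal at the single point $z=0$. Non-normality at a point requires competing behaviors — typically zeros of $f_n$ accumulating there while $|f_n|\to\infty$ nearby — and by making $g$ zero-free you have eliminated the only mechanism that could produce it.

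The missing idea is to arrange the zeros of $f_n$ to accumulate on a set whose $m$-th derived set is nonempty for every $m$. The paper does this with $P_n(z)=z^n-3^n$ on an annulus around $\Gamma(0,3)$: the zeros $3e^{2\pi ik/n}$ become dense on the circle $\Gamma(0,3)$ while $|P_n|\to\infty$ off that circle, so the set of non-normality is the entire circle, a perfect set, and hence every derived set is again the circle. The differential inequality is then checked by the explicit estimate $|P_n'(z)|/(1+|P_n(z)|^\alpha)\ge \text{const}\cdot n\,(3^{1-\alpha}/\varepsilon^{1+\alpha})^n\to\infty$ for suitable $\varepsilon>1$, which uses $\alpha<1$, and the general bounded domain is handled by an affine change of variable. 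Your step (ii) computation (using $\alpha<1$ to make the quotient blow up) is in the right spirit, but it must be married to a sequence whose zero sets cluster on a perfect subset of $D$, not to a zero-free sequence.
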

\begin{proof}
For a given $0\le\alpha<1$, let us first prove the theorem for
some specific domain. Let $1<\ve<3^{\frac{1-\alpha}{1+\alpha}}$.
Consider the polynomial functions $P_n(z)=z^n-3^n$ defined on the
ring $D_\ve:=R\left(0,\DF3\ve,3\ve\right)$. Clearly every
subsequence of $\{P_n\}^\infty_{n=1}$ is not normal exactly at any
point of $\Gamma(0,3)$ ($\Gamma(0,3)$ is of power $\aleph$ and of
course $\left(\Gamma(0,3)\right)^{(m)}_{D_\ve}=\Gamma(0,3)$ for
every $m\ge1$).

\begin{claim*}$\inf\limits_{z\in D_\ve}\DF{|P'_n(z)|}{1+|P_n(z)|^\alpha}\underset{n\to\infty}\longrightarrow\infty$.
\end{claim*}
\textit{Proof of Claim}. For every $z\in D_\ve$, we have
$$\frac{|P'_n(z)|}{1+|P_n(z)|^\alpha}=\frac{n|z|^{n-1}}{1+|z^n-3^n|^\alpha}>\frac{n\cdot\left(\frac3\ve\right)^n\cdot\frac\ve3}
{1+(2\cdot(3\ve)^n)^\alpha}>\frac{n\cdot\left(\frac3\ve\right)^n\cdot\frac\ve3}
{2(2\cdot(3\ve)^n)^\alpha}=\frac{n\ve}{6\cdot2^{1+\alpha}}\left(\frac{3^{1-\alpha}}{3^{1+\alpha}}\right)^n.$$
Since $\ve^{1+\alpha}<3^{1-\alpha}$, the last expression tends to
$\infty$, as $n\to\infty$, and this proves the claim.

\medskip

Now, give $C>0$, we have by the claim that there exists $N$, such that
$$\inf\limits_{z\in D_\ve}\DF{|P'_n(z)|}{1+|P_n(z)|^\alpha}>C\quad\text{for}\quad n\ge N,$$and thus
$\{P_n\}^\infty_{n=N}\subset\mathcal F_{\alpha,C}(D_\ve)$. Since $\{P_n\}^\infty_{n=N}$ is not $Q_m-$normal
in $D_\ve$, we proved the theorem for $D=D_\ve$.

\medskip

Now, let $D$ be some bounded domain. There is a ring $R(z_0,R_1,R_2)$, together with a linear transformation $\varphi$,
$\varphi(z)=az+b$, such that $\varphi:$ $R(z_0,R_1,R_2)\longrightarrow D_\ve$ is one to one and onto (that is, $R(z_0,R_1,R_2)$
and $D_\ve$ are conformally equivalent) and such that $\varphi^{-1}(\Gamma(0,3))\cap D$ contain an arc of a circle. Every
subsequence of $\{P_n\circ\varphi\}^\infty_{n=1}$ is not $Q_m-$normal in $D$, for every $m\ge1$. Also for every $C>0$, there
exists $N$, such that $\{P_n\circ\varphi\}^\infty_{n=N}$ is contained in  $\mathcal F_{\alpha,C}(D)$ and thus also
$\mathcal F_{\alpha,C}(D)$ is not $Q_m-$normal in $D$ for every $m\ge1$. This completes the proof of the theorem.
\end{proof}

\subsection{The case $\alpha<0$}

Consider the family $\mathcal F_{\alpha,C}(D)$. If $\alpha<0$ and $f(z)=0$, then $|f(z)|^\alpha$ is not well-defined,
so if we require in addition that $f\ne0$, then since $|f'|>C$, we get by Gu's criterion that
$\mathcal F_{\alpha,C}(D)$ is normal.

If we permit that $f(z)=0$, then consider $z_0$, such that $f(z_0)=0$, we have
$$\lim\limits_{z\to z_0}\frac{|f'(z)|}{1+|f(z)|^\alpha}=0,$$and so the condition $$\frac{|f'(z)|}{1+|f(z)|^\alpha}>C$$
cannot be satisfied.

\section{The reverse inequality $\DF{|f'|}{1+|f|^\alpha}<C$}

Let $\alpha>0$, and let $\mathcal F$ be a family of functions
meromorphic in a domain $D$. By Theorem R, if $\mathcal
F_\alpha:=\left\{\DF{f'}{1+|f|^\alpha}:f\in\mathcal F\right\}$ is
locally uniformly bounded in $D$, then $\mathcal F$ is normal.

\medskip

For $0\le\alpha\le1$, the converse is false. Consider the family $\mathcal F=\{z^n:n\in\mathbb N\}$, in
$D=\Delta(3,1)$. Obviously, $f_n(z)\Rightarrow\infty$ in $D$, but$$\frac{|f'_n(z)|}{1+|f_n(z)|^\alpha}=\DF{n|z|^{n-1}}{1+|z|^{n\alpha}}.$$
Thus, since $\alpha<1$, we get that$$\inf\limits_{z\in D}\frac{|f'_n(z)|}{1+|f_n(z)|^\alpha}\underset{n\to\infty}\longrightarrow
\infty,$$and thus $\mathcal F_\alpha$ is not locally uniformly bounded.

\medskip

For $\alpha\ge2$, the converse holds.

Indeed, assume that $\mathcal F$ is normal in $D$. We have for
every $f\in\mathcal F$ and $z\in D$
$$\frac{|f'(z)|}{1+|f(z)|^\alpha}=\frac{|f'(z)|}{1+|f(z)|^2}\cdot\frac{1+|f(z)|^2}{1+|f(z)|^\alpha}.$$
By Marty's Theorem, $\mathcal F_2$ is locally uniformly bounded in $D$. In addition,
$h(x)=\DF{1+x^2}{1+x^\alpha}$ is bounded in $[0,+\infty)$, and there is some $M>0$, such that
$$\DF{1+|f(z)|^2}{1+|f(z)|^\alpha}\le M$$ for every $f\in\mathcal F$, $z\in D$. We then deduce that
$\mathcal F_\alpha$ is locally uniformly bounded in $D$.

\medskip

We are left with the case $1<\alpha<2$. We show now that for
meromorphic functions, normality does not imply local uniform
boundedness, for every $1<\alpha<2$. Take $\mathcal
F=\left\{\DF1z\right\}$ (only a single function) in $\Delta$. We
have
$$\frac{|f'(z)|}{1+|f(z)|^\alpha}\underset{z\to0}\longrightarrow\infty.$$ For holomorphic functions, we can approve
the converse:

\begin{theorem}\label{thm4} Let $1<\alpha<2$. Suppose that $\mathcal F$ is a normal family of
holomorphic functions in $D$. Then
$$\mathcal F_{\alpha}=\left\{\frac{|f'(z)|}{1+|f(z)|^{\alpha}}:f\in\mathcal F\right\}$$
is locally uniformly bounded in $D$.
\end{theorem}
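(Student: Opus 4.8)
The plan is to argue by contradiction, distinguishing the two ways a subsequence of a normal family of holomorphic functions can behave. Suppose $\mathcal F_\alpha$ is not locally uniformly bounded in $D$; then there are $f_n\in\mathcal F$ and $z_n\to z_0\in D$ with $\frac{|f_n'(z_n)|}{1+|f_n(z_n)|^\alpha}\to\infty$. By normality, pass to a subsequence with $f_n\overset\chi\Rightarrow f$ uniformly on compact subsets of some disk $\Delta(z_0,r)$. A spherically locally uniform limit of holomorphic functions is either holomorphic or identically $\infty$ (a brief Hurwitz/minimum–modulus argument, which I would include), so there are two cases.

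If $f$ is holomorphic, then on a smaller disk $f_n\Rightarrow f$ in the Euclidean metric (spherical convergence to a finite value forces Euclidean convergence near that value), hence $f_n'\Rightarrow f'$, so $\{|f_n'|\}$ is uniformly bounded on a neighborhood of $z_0$. Since the denominator satisfies $1+|f_n(z_n)|^\alpha\ge1$, we get $\frac{|f_n'(z_n)|}{1+|f_n(z_n)|^\alpha}\le|f_n'(z_n)|$, which is bounded — a contradiction. (This case uses nothing about $\alpha$.)

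If $f\equiv\infty$, then $f_n\Rightarrow\infty$ uniformly on some $\Delta(z_0,\rho)$, so for large $n$ the function $g_n:=1/f_n$ is holomorphic and zero-free on $\Delta(z_0,\rho)$ with $g_n\Rightarrow0$ there. A direct computation gives, for $z\in\Delta(z_0,\rho)$,
$$\frac{|f_n'(z)|}{1+|f_n(z)|^\alpha}=\frac{|g_n'(z)|}{|g_n(z)|^{2-\alpha}\bigl(1+|g_n(z)|^\alpha\bigr)}\le\frac{|g_n'(z)|}{|g_n(z)|^{2-\alpha}}.$$
The device that makes this case work is that $\alpha-1>0$: since $g_n$ is zero-free on the simply connected disk $\Delta(z_0,\rho)$, the power $G_n:=g_n^{\alpha-1}=\exp\bigl((\alpha-1)\log g_n\bigr)$ is a well-defined holomorphic function there, with $|G_n|=|g_n|^{\alpha-1}\Rightarrow0$ (using $\alpha-1>0$) and $G_n'=(\alpha-1)\,g_n^{\alpha-2}g_n'$, so that $\frac{|g_n'(z)|}{|g_n(z)|^{2-\alpha}}=\frac{|G_n'(z)|}{\alpha-1}$. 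Then Cauchy's estimate turns $G_n\Rightarrow0$ on $\Delta(z_0,\rho)$ into $G_n'\Rightarrow0$ on every strictly smaller disk, so that $\frac{|f_n'(z)|}{1+|f_n(z)|^\alpha}\le\frac{|G_n'(z)|}{\alpha-1}\to0$ uniformly near $z_0$; since $z_n\to z_0$, this contradicts $\frac{|f_n'(z_n)|}{1+|f_n(z_n)|^\alpha}\to\infty$ and finishes the proof.

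The only genuine obstacle is the case $f\equiv\infty$, and it is resolved entirely by replacing $g_n$ with the holomorphic power $g_n^{\alpha-1}$; the rest is bookkeeping. I would remark that the argument in fact uses only $\alpha>1$ (for $\alpha\ge2$ it reproves what already follows from Marty's Theorem), and that holomorphy of $\mathcal F$ is essential: for a meromorphic $f_n$ with a pole, $g_n=1/f_n$ vanishes there, $g_n^{\alpha-1}$ acquires a branch point, and the estimate collapses — consistent with the counterexample $\mathcal F=\{1/z\}$ noted just before the theorem.
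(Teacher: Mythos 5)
Your proof is correct, and in the essential case $f\equiv\infty$ it takes a genuinely different route from the paper. The paper handles that case by applying Harnack's inequality to the positive harmonic functions $\log|f_n|$ (legitimate since $f_n\Rightarrow\infty$ makes $f_n$ zero-free and large near $z_0$), obtaining two-sided bounds $|f_n(z_0)|^{(R-\rho)/(R+\rho)}\le|f_n(z)|\le|f_n(z_0)|^{(R+\rho)/(R-\rho)}$ on $\overline{\Delta}(z_0,\rho)$, then choosing $\rho$ so small that $\frac{R+\rho}{R-\rho}<\sqrt{(1+\alpha)/2}$ and feeding this into the Cauchy integral formula for $f_n'$; the resulting exponent of $|f_n(z_0)|$ in the ratio is $\frac{(1-\alpha)/2}{\sqrt{(1+\alpha)/2}}<0$, so the ratio tends to $0$. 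Your substitution $g_n=1/f_n\Rightarrow 0$ followed by the identity $\frac{|g_n'|}{|g_n|^{2-\alpha}}=\frac{|(g_n^{\alpha-1})'|}{\alpha-1}$, with $g_n^{\alpha-1}$ single-valued and holomorphic on the zero-free simply connected disk, reaches the same conclusion (the ratio tends to $0$ locally uniformly) via a single application of Cauchy's estimate, with no Harnack and no exponent balancing. What your version buys is transparency: it isolates exactly where $\alpha>1$ is used (so that $|g_n|^{\alpha-1}\Rightarrow 0$) and where holomorphy of $\mathcal F$ is used (zero-freeness of $g_n$, without which $g_n^{\alpha-1}$ branches --- matching the counterexample $\{1/z\}$). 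Both arguments in fact cover all $\alpha>1$, as you note. The only small items to write out in a final version are the standard dichotomy (a $\chi$-limit of holomorphic functions is holomorphic or $\equiv\infty$) and the observation that $z_n$ eventually lies in the smaller disk where the uniform estimates hold; neither is a gap.
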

\begin{proof}
Suppose to the contrary that $\mathcal F_\alpha$ is not locally uniformly bounded in $D$. Then there exist $z_0\in D$, $z_n\to z_0$ and
$f_n\in\mathcal F$, such that
\begin{equation}
\label{10}
\frac{|f'_n(z_n)|}{1+|f_n(z_n)|^\alpha}\underset{n\to\infty}\longrightarrow\infty.
\end{equation}
The sequence $\{f_n\}^\infty_{n=1}$ has a uniform convergent subsequence in $D$, that without loss of generality
we also call  $\{f_n\}^\infty_{n=1}$. So we assume that
$$f_n\Rightarrow f\quad\text{in}\quad D.$$Let us separate into two cases, according to the behavior of $f$.

\medskip

\noindent\underbar{Case (1)} $f$ is holomorphic in $D$.

Then $f'_n\Rightarrow f'$ in $D$, and we easily get a contradiction to  \eqref{10}.

\medskip

\noindent\underbar{Case (2)} $f\equiv\infty$.

In particular, we have $$f_n(z_0)\underset{n\to\infty}\longrightarrow\infty.$$
We take $R>0$, such that $\overline{\Delta}(z_0,R)\subset D$ and
\begin{equation}
\label{11}
0<\rho<R\frac{\sqrt{1+\alpha}-\sqrt{2}}{\sqrt{1+\alpha}+\sqrt{2}}.
\end{equation}
By Harnack's inequality, for large enough $n$, we have for every $z\in\overline{\Delta}(z_0,\rho)$
\begin{equation}
\label{12}
|f_n(z_0)|^{\frac{R-\rho}{R+\rho}}\le|f_n(z)|\le|f_n(z_0)|^{\frac{R+\rho}{R-\rho}}.
\end{equation}
By \eqref{11} we get that
\begin{equation}
\label{13}
\frac{R+\rho}{R-\rho}<\sqrt{\frac{1+\alpha}2}\quad\bigg(\text{and
thus}\quad\frac{R-\rho}{R+\rho}>\sqrt{\frac2{1+\alpha}}\,\bigg).
\end{equation}
Now, by \eqref{12},\eqref{13} and Cauchy's integral formula, we get that for every $z\in\overline{\Delta}(z_0,\rho/2)$
and large enough $n$,
$$|f'_n(z)|=\frac1{2\pi}\left|\D\int\limits_{|\zeta-z_0|=\rho}\frac{f_n(\zeta)}{(\zeta-z)^2}d\zeta\right|\le
\frac{\rho}{(\rho/2)^2}\max\limits_{|\zeta-z_0|=\rho}|f_n(\zeta)|\le\frac4{\rho}|f_n(z_0)|^{\sqrt{\frac{1+\alpha}2}}.$$
Thus, by the last inequality, \eqref{11} and \eqref{12}, we have for large enough $n$,
$$\begin{aligned}\frac{|f'_n(z_n)|}{1+|f_n(z_n)|^\alpha}&\le\DF{\frac4{\rho}|f_n(z_0)|^{\sqrt{\frac{1+\alpha}2}}}{1+|f_n(z_0)|^{\frac\alpha{\sqrt{(1+\alpha)/2}}}}
\le\frac4\rho|f_n(z_0)|^{\sqrt{\frac{1+\alpha}2}-\frac\alpha{\sqrt{(1+\alpha)/2}}}\\
&=\frac4\rho|f_n(z_0)|^{\frac{1-\alpha}2\big/\sqrt{(1+\alpha)/2}}\underset{n\to\infty}\longrightarrow0.\end{aligned}$$
This is a contradiction to \eqref{10} and thus the Theorem
follows.
\end{proof}

\bibliographystyle{amsplain}

\end{document}